\documentclass[11pt]{article}

\usepackage[a4paper]{geometry}
\usepackage{amsmath,amssymb,amsthm}
\usepackage{bm}
\usepackage{hyperref}
\usepackage{enumitem}
\usepackage{listings}
\usepackage{xcolor}
\usepackage{placeins}
\usepackage{graphicx}

\usepackage{tikz}\usetikzlibrary{arrows.meta,calc,decorations.pathreplacing}

\lstdefinestyle{matlabstyle}{
  language=Matlab,
  basicstyle=\ttfamily\small,
  keywordstyle=\bfseries,
  commentstyle=\itshape\color{gray!70!black},
  stringstyle=\color{teal!60!black},
  numbers=left,
  numberstyle=\tiny,
  stepnumber=1,
  numbersep=6pt,
  showstringspaces=false,
  tabsize=2,
  breaklines=true,
  frame=lines
}

\newtheorem{lemma}{Lemma}[section]

\newtheorem{proposition}{Proposition}[section]
\newtheorem{remark}{Remark}[section]
\newtheorem{example}{Example}[section]

\newcommand{\R}{{\mathbb R}}
\newcommand{\T}{{\mathbb T}}

\title{\textbf{A Moser-Type Construction for the Liouville Equation}}

\author{A. Borz\`i\thanks{Institut f\"ur Mathematik, Universit\"at W\"urzburg, Emil-Fischer-Strasse 30,
97074 W\"urzburg, Germany. e-mail: alfio.borzi@mathematik.uni-wuerzburg.de} 
\and M. Caponigro\thanks{Dipartimento di Matematica,
Università di Roma ``Tor Vergata'',
Via della Ricerca Scientifica 1,
00133 Roma,
Italy. e-mail: caponigro@mat.uniroma2.it} 
\and A. Vicari\thanks{Dipartimento di Matematica,
Università di Roma ``Tor Vergata'',
Via della Ricerca Scientifica 1,
00133 Roma,
Italy. e-mail: vicari@axp.mat.uniroma2.it} }

\date{}

\begin{document}

\maketitle

\begin{abstract}
A Moser-type construction for the kinetic Liouville equation is proposed, 
which is based on a
characteristic-adapted interpolation. For Hamiltonian accelerations,
the construction is reduced to a family of weighted elliptic problems in the
velocity variable. The corresponding kinetic compatibility condition is
derived.
\end{abstract}

\section{Introduction}

In the seminal work~\cite{moser65}, J. Moser shows that on a compact manifold without 
boundary, any pair of smooth and positive volume forms of equal total mass can be 
connected by a diffeomorphism. Later developments of this result, in particular \cite{dacorognamoser}, 
provide a constructive approach based on interpolating the densities and solving an elliptic
Neumann problem for a time-dependent vector field whose flow realizes the
desired transport.

More precisely, let $\Omega\subset \mathbb{R}^n$ be a bounded connected domain with smooth, i.e.
$C^\infty$ boundary, and let $f,g\in C^\infty(\overline{\Omega})$ satisfy
$f>0$, $g>0$ on $\overline{\Omega}$ and be such that 
$$
\int_\Omega f(x)\,dx=\int_\Omega g(x)\,dx.
$$
Then~\cite{moser65} states the 
existence of a diffeomorphism $\Gamma: \overline{\Omega} \to \overline{\Omega}$ pointwise 
fixing the boundary, such that $\Gamma_\#(f\,dx)=g\,dx$, namely
$$
f(x)=g(\Gamma(x))\det D\Gamma(x), \quad \mbox{ for every } x \in \Omega.
$$
To construct such a diffeomorphism, a possible strategy, following the notation of 
Dacorogna and Moser~\cite{dacorognamoser} (see also~\cite[Box 4.3]{santambrogio2015optimal}), is to
connect the initial and final densities by a smooth path, for instance, the linear interpolation
\begin{equation}
\rho(x,t)=(1-t) \, f(x)+t \, g(x) , \qquad t\in[0,1], \, x \in \Omega .
\label{origInterpolation}
\end{equation}
The goal is then to find a time-dependent velocity field $b(x,t)$ generating this path as a 
solution of the continuity equation
\begin{equation}\label{eq:cont}
\partial_t \rho(x,t) + \nabla \cdot \big(b(x,t) \, \rho(x,t)\big) = 0,
\qquad \rho(\cdot,0)=f, 
\end{equation}
so that $\rho(\cdot,1)=g$.  Hence, denoting by $\Gamma_t$ the flow associated with the vector field $b$,
the solution of the continuity equation \eqref{eq:cont} can be written as 
$\rho(\cdot,t)dx=(\Gamma_t)_{\#}(f\,dx)$. In particular, the $1$-time flow $\Gamma_1$ pushes 
forward the initial density onto the final one
$$
(\Gamma_1)_\# (f\,dx)=g\,dx ,
$$
providing the diffeomorphism solving the prescribed transport problem.

Differentiating~\eqref{origInterpolation}, $\partial_t\rho=g-f.$
Therefore, the problem is to construct a flux \(w:= b \, \rho \) satisfying 
$\nabla\cdot w=f-g$. This goal can be achieved setting $w=-\nabla\phi$ 
and solving the elliptic problem
\begin{equation}
\label{eq:moser_poisson}
\begin{cases}
-\Delta \phi=f - g & \text{in }\Omega,\\
\partial_n\phi=0 & \text{on }\partial\Omega,
\end{cases}
\end{equation}
(or an appropriate decay condition as $x \to \infty$) together with the normalization
\[
\int_{\Omega}\phi(x)\,dx=0.
\]

In the present work, we investigate the extension of 
this construction to the case of phase-space densities 
of 
Hamiltonian systems. In this setting, the continuity 
equation governing the evolution of densities takes the form of 
\begin{equation}\label{eqLiouville}
\partial_t \rho(x,v,t) + v\cdot\nabla_x \rho(x,v,t)  
+  \nabla_v \cdot \big( a(x,v,t) \, \rho(x,v,t)  \big)= 0, 
\end{equation}
where $(x,v)\in \Omega\subset \R^n\times\R^n$.
More precisely, the problem is formulated as follows. 
\begin{quotation}
\noindent Let $\Omega = X\times V$,
with $X=\R^n$ or $\T^n$ ($n$-dimensional torus) and $V = \R^n$. 
Let $f,g\in C^\infty(\Omega)$ be strictly positive and
such that
\[
\int_\Omega f(x,v) \,dx\,dv = \int_\Omega g(x,v) \,dx\,dv.
\]
Consider the Liouville equation~\eqref{eqLiouville} with initial condition $\rho(0)=f$. \\
Find $a:\Omega \times (0,1)    \to  \R^n$ such that the associated solution $\rho(t)$ 
satisfies $\rho(1)=g$. 
\end{quotation}
This problem can be intepreted as a controllability problem for the 
Liouville equation~\eqref{eqLiouville}.

\medskip

Equation~\eqref{eqLiouville}, called Liouville equation by L.~Boltzmann in his 
lectures~\cite{boltzmann}, plays a fundamental role in kinetic theory and in the
statistical description of non-interacting many-particle systems, 
where $\rho$ denotes the normalized material density function. 
At the microscopic level, our Liouville equation governs the evolution 
of the phase-space density of the particles' dynamics given by the 
differential system 
\begin{equation}\label{eq:micro}
\begin{cases}
\chi'(t) &=\upsilon(t), \\ 
\upsilon'(t) &=a(\chi(t),\upsilon(t),t),
\end{cases}
\end{equation}
where the spatial drift is fixed to be the velocity variable $\upsilon$,
while the force (acceleration) field constitutes the only degree of freedom.

Moser’s approach has been studied 
from various perspectives in the context of the continuity equation also
within the framework of control theory~\cite{AC09,brockett2012notes,AV25}. To the best of our knowledge, 
the corresponding problem for the Liouville equation~\eqref{eqLiouville}, defined on the 
phase space, is an open problem. It requires to resolve the significant 
constraint due to the fact that the drift in the $x$-direction is prescribed and 
equal to $v$, which is an independent variable.

\medskip

Our discussion can be placed at the intersection of several classical theories. 
The original inspiration comes from the volume-form construction of
J.~Moser~\cite{moser65} and its constructive reformulation by
B.~Dacorogna and J.~Moser~\cite{dacorognamoser}. In that framework, the
transport field is essentially arbitrary and the only condition on the initial and final densities is the
equality of total mass. The  Liouville setting differs because the
admissible phase-space velocity fields are constrained by the kinetic structure$
(\dot x,\dot v)=(v,a).$

In particular, in what follows we will focus on the conservative Hamiltonian case in which
\[
(\dot x,\dot v)=(v,-\nabla_x V).
\]
The conservative Hamiltonian case is related to the classical Liouville theorem
in Hamiltonian mechanics, already present in the works of
J.~Liouville and later systematically developed in statistical mechanics and
Hamiltonian dynamics. In this case, Hamiltonian flows preserve phase-space
volume and transport densities by measure-preserving rearrangements:
\[
g=f\circ\Psi^{-1},
\]
where \(\Psi\) is the Hamiltonian flow map. Consequently, all rearrangement
invariants are preserved. In this framework, a equimeasurability condition arises.

The distinction between arbitrary volume-preserving maps and Hamiltonian flows
is closely connected with the rigidity phenomena of symplectic geometry
initiated by Gromov through the non-squeezing theorem~\cite{Gromov1985} and
further developed in modern symplectic topology; see for instance
\cite{Hoferzehnder1994,Mcdsal2017}. In this framework, Hamiltonian diffeomorphisms
constitute a much more rigid class than general measure-preserving maps,
since they preserve not only phase-space volume but also the underlying
symplectic structure.

The preservation of quantities of the form $\int_{\Omega}\Phi(f)$ 
is also related to the theory of Casimir invariants in kinetic and fluid
equations. Such invariants play a fundamental role in the analysis of the
Vlasov equation, incompressible Euler equations, and related transport
problems; see for example \cite{MarsdenWeinstein1983,Morrison1980}. Rearrangement
methods and measure-preserving transport techniques have also been extensively
developed in optimal transport and fluid mechanics, particularly in the works
of Y.~Brenier~\cite{Brenier1989,Brenier1991,Brenier2003}.

The present discussion suggests a hierarchy of compatibility conditions for
endpoint reachability problems associated with the Liouville equation:

\begin{description}

\item[P1] for arbitrary accelerations \(a(x,v,t)\), one may expect that only
conservation of total mass is required;

\item[P2] for time-dependent conservative Hamiltonian accelerations 
$a=-\nabla_x V(x,t)$, 
equimeasurability becomes a necessary condition; (see Proposition~\ref{prop:equimeasurable} below)

\item[P3] for autonomous Hamiltonians $a=-\nabla_x V(x)$ 
the full Hamiltonian energy distribution must be preserved; (see Proposition~\ref{prop:autonomous} below)

\item[P4] for general integrable Hamiltonian systems, further invariants related
to action variables and invariant tori may appear.  
\end{description}

%
\section{The case of Hamiltonian flow}

We consider the case in which the acceleration is
generated by a time-varying potential: 
\[
        a(x,t)=-\nabla_x V(x,t).
\]
Then the phase-space dynamics is a Hamiltonian system as follows 
\begin{equation}\label{eq:hamtimedep}
\begin{cases}
\dot x &=v,\\
\dot v &=-\nabla_x V(x,t),
\end{cases}
\end{equation}
corresponding to the Hamiltonian 
$$
H(x,v,t)=\frac12 |v|^2+V(x,t).
$$
In this case a necessary condition for the reachability of $g$ from $f$ via the flow of~\eqref{eq:hamtimedep} is equimisurability as stated in the following proposition.

\begin{proposition}
\label{prop:equimeasurable}
Let $\rho(x,t)$ be the solution 
of the Liouville equation
\[
\partial_t\rho + v\cdot\nabla_x\rho - \nabla_x V(x,t)\cdot\nabla_v\rho = 0,
\qquad
\rho(\cdot,0)=f,
\]
such that 
\[
\rho(\cdot,1)=g .
\]
Then $f$ and $g$ are equimeasurable, namely
\[
        \int_{\Omega} \Phi(f(x,v))\,dx\,dv
        =
        \int_{\Omega} \Phi(g(x,v))\,dx\,dv
\]
for any measurable function $\Phi:\R \to \R$.
\end{proposition}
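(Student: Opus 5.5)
The plan is to use the method of characteristics together with the fact that the Hamiltonian vector field $(v,-\nabla_x V(x,t))$ is divergence-free in phase space. Assume $V$ is smooth enough, e.g.\ $\nabla_x V$ globally Lipschitz in $x$ and continuous in $t$, so that the flow exists globally on $\Omega=X\times\R^n$. Under this assumption, \eqref{eq:hamtimedep} generates a family of diffeomorphisms $\Psi_{t}:\Omega\to\Omega$ with $\Psi_0=\mathrm{id}$.

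First step: show that $\rho$ is constant along characteristics. Differentiating $t\mapsto\rho(\Psi_t(x,v),t)$ and using the Liouville equation gives zero. Hence $\rho(\Psi_t(z),t)=f(z)$, i.e.\ $\rho(\cdot,t)=f\circ\Psi_t^{-1}$. In particular $g=f\circ\Psi_1^{-1}$. Here I use that the equation is written in transport (non-conservative) form. This is legitimate precisely because $\nabla_v\cdot(-\nabla_xV(x,t))=0$, since the acceleration does not depend on $v$. For the same reason the conservative form \eqref{eqLiouville} and the transport form coincide.

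Second step: show that $\Psi_t$ preserves Lebesgue measure on $\Omega$. The phase-space vector field $F=(v,-\nabla_xV(x,t))$ satisfies
\[
\nabla_{(x,v)}\cdot F=\nabla_x\cdot v+\nabla_v\cdot(-\nabla_xV)=0 .
\]
Liouville's formula then gives $\frac{d}{dt}\det D\Psi_t=(\nabla\cdot F)(\Psi_t)\det D\Psi_t=0$, so $\det D\Psi_t\equiv1$.

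Third step: change variables $z=\Psi_1(w)$. This gives
\[
\int_\Omega\Phi(g(z))\,dz=\int_\Omega\Phi(f(w))\,|\det D\Psi_1(w)|\,dw=\int_\Omega\Phi(f(w))\,dw .
\]
Equivalently, $(\Psi_1)_\#(dx\,dv)=dx\,dv$ and $g=f\circ\Psi_1^{-1}$ imply that $f$ and $g$ have the same distribution function, $|\{g>s\}|=|\{f>s\}|$ for all $s$.

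The main obstacle is making sense of the identity for arbitrary measurable $\Phi$ on an infinite-measure domain. If $\Phi\ge0$, both sides are well-defined in $[0,\infty]$ and equal by the change of variables for nonnegative measurable functions. For general $\Phi$ I would apply this to $\Phi^\pm$ separately, interpreting the statement as saying that both integrals exist or fail to exist simultaneously, with equal values when they exist. The equimeasurability formulation $|\{g>s\}|=|\{f>s\}|$ is the clean underlying statement, and I would state that as the core result. A secondary technical point is uniqueness of the solution $\rho$, so that it really is $f\circ\Psi_t^{-1}$. For classical $C^1$ solutions this follows directly from the first step, since any such solution is constant along the characteristics.
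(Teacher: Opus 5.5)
Your proposal is correct and follows essentially the paper's argument: constancy of $\rho$ along the Hamiltonian characteristics gives $g=f\circ\Psi_1^{-1}$, and the divergence-free phase-space field gives $\det D\Psi_1=1$. The only real difference is the last step, where you change variables directly instead of comparing the level sets $\{f>\lambda\}$ and $\{g>\lambda\}$ as the paper does; this avoids the layer-cake step that the paper leaves implicit, and your remarks on global existence of the flow and on splitting $\Phi=\Phi^+-\Phi^-$ on the infinite-measure domain are sensible additions.
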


\begin{proof}
The phase-space vector field of~\eqref{eq:hamtimedep} is divergence-free 
since $\nabla_x\cdot v+\nabla_v\cdot(-\nabla_x V)=0$.
As a consequence, the associated flow is volume-preserving in phase space.
If \(\Psi_t\) denotes the Hamiltonian flow, then the solution of~\eqref{eqLiouville} is
\[
        \rho(t)=\rho(0)\circ \Psi_t^{-1}.
\]
The Hamiltonian flow $\Psi_t$ preserves phase-space volume
(Liouville theorem), that is,
\[
\det D\Psi_t(x,v)=1 .
\]
Along characteristics, the density satisfies
\[
\frac{d}{dt}\rho(\Psi_t(x,v),t)=0,
\]
hence $\rho(\Psi_t(x,v),t)=f(x,v)$. At time $t=1$ we obtain $g(x,v)=f(\Psi_1^{-1}(x,v))$. 

Fix $\lambda\in\R$ and define the level sets
\[
E_f(\lambda)
=
\{z\in\Omega:f(z)>\lambda\},
\qquad
E_g(\lambda)
=
\{z\in\Omega:g(z)>\lambda\}.
\]
Since $g(z)=f(\Psi_1^{-1}(z))$, we have
\[
z\in E_g(\lambda)
\iff
\Psi_1^{-1}(z)\in E_f(\lambda),
\]
that is,
\[
E_g(\lambda)=\Psi_1(E_f(\lambda)).
\]

Because $\Psi_1$ is volume-preserving,
\[
|E_g(\lambda)|
=
|\Psi_1(E_f(\lambda))|
=
|E_f(\lambda)|.
\]
Therefore
\[
|\{g>\lambda\}|
=
|\{f>\lambda\}|,
\]
and the conclusion follows.
\end{proof}

    Equimeasurability implies, in particular, that
\[
        \int_{\Omega} f^p\,dx\,dv
        =
        \int_{\Omega} g^p\,dx\,dv,
        \qquad 1\le p<\infty .
\]
Thus, in the Hamiltonian setting, equality of total mass is not
sufficient. The Hamiltonian flow preserves the full distribution of density
values.

\begin{remark}
Equimeasurability is only a necessary condition for reachability by a Hamiltonian force. 
Indeed, if
\[
g=f\circ\Psi_1^{-1},
\]
where \(\Psi_t\) is the Hamiltonian flow generated by $H(x,v,t)$, 
then \(f\) and \(g\) are equimeasurable because \(\Psi_1\) is
volume-preserving.

The converse, however, is not true in general. Equimeasurability only implies
that there exists some volume-preserving map \(T\) such that
\[
g=f\circ T^{-1},
\]
under appropriate regularity assumptions. It does not imply that \(T\) is the
time-one map of a Hamiltonian system of the mechanical form
\[
\dot x=v,\qquad \dot v=-\nabla_x V(x,t).
\]
This class is much smaller than the class of all volume-preserving maps.

Thus, the construction of \(V\) requires additional geometric compatibility, that is, 
the desired rearrangement must be realizable by a mechanical Hamiltonian flow.

This is an overdetermined constraint because the unknown force
\(-\nabla_x V(x,t)\) depends only on \(x\) and \(t\), not on \(v\).
Therefore, even when \(f\) and \(g\) are equimeasurable, a conservative
potential \(V\) may not exist.
\end{remark}

If the potential is independent of time, namely
\[
        V=V(x),
\]
then the Hamiltonian
\[
        H(x,v)=\frac12 |v|^2+V(x)
\]
is conserved along characteristics
\[
        \frac{d}{dt}H(x(t),v(t))=0.
\]
The Hamiltonian flow cannot move mass between different energy
levels. If \(g\) is reachable from \(f\) by the autonomous Hamiltonian flow,
then,  the energy marginal distributions of \(f\) and \(g\) must necessarily agree:
\[
        \int_{\{H\le E\}} f(x,v)\,dx\,dv
        =
        \int_{\{H\le E\}} g(x,v)\,dx\,dv
        \qquad \text{for all }E .
\]

\begin{proposition}\label{prop:autonomous}
Let $\rho(x,t)$ be the solution 
of the Liouville equation
\[
\partial_t\rho + v\cdot\nabla_x\rho - \nabla_x V(x)\cdot\nabla_v\rho = 0,
\qquad
\rho(\cdot,0)=f,
\]
such that 
\[
\rho(\cdot,1)=g.
\]
Let \[
        H(x,v)=\frac12 |v|^2+V(x)
\]
then for every measurable function \(\Phi : \R \to \R\), it must hold
\[
        \int_{\Omega} \Phi(H(x,v))\, f(x,v)\,dx\,dv
        =
        \int_{\Omega} \Phi(H(x,v))\, g(x,v)\,dx\,dv .
\]

\end{proposition}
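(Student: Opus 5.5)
We argue as in the proof of Proposition~\ref{prop:equimeasurable}, now using that the flow is autonomous, so that $H$ is a first integral. Let $\Psi_t$ be the flow of $\dot x=v$, $\dot v=-\nabla_x V(x)$. The phase-space vector field $(v,-\nabla_x V)$ is divergence-free, so $\det D\Psi_t=1$. As before, constancy of $\rho$ along characteristics gives $g=f\circ\Psi_1^{-1}$.

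The new ingredient is conservation of energy. A direct computation gives
\[
\frac{d}{dt}H(\Psi_t(z)) = \nabla_x V\cdot v + v\cdot(-\nabla_x V)=0 ,
\]
so $H\circ\Psi_1=H$, and hence also $H\circ\Psi_1^{-1}=H$.

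With this in hand, we change variables $z=\Psi_1(y)$ in the right-hand side:
\[
\int_\Omega \Phi(H(z))\,g(z)\,dz=\int_\Omega \Phi(H(\Psi_1(y)))\,f(y)\,\det D\Psi_1(y)\,dy=\int_\Omega \Phi(H(y))\,f(y)\,dy .
\]
The first equality uses $g(\Psi_1(y))=f(y)$. The second uses energy conservation together with $\det D\Psi_1=1$. This is exactly the claimed identity.

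The main obstacle is not the algebra but making the integrals and the change of variables legitimate on the unbounded phase space $\Omega=X\times\R^n$. We need:
\begin{itemize}
\item global existence of the flow, so that $\Psi_1$ is a diffeomorphism of $\Omega$ onto itself;
\item integrability of $\Phi(H)f$.
\end{itemize}

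For the first point, it suffices that $\nabla V$ be globally Lipschitz, or that $V$ be bounded below. In the latter case, energy conservation confines each trajectory to a set where $|v|$ is bounded, which rules out blow-up. On $\T^n$ this is automatic.

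For the second point, we first prove the identity for bounded measurable $\Phi$, for which both integrals are finite since $f,g\in L^1$. We then extend to nonnegative $\Phi$ by monotone convergence, and to general $\Phi$ whenever either side is absolutely integrable, since the argument applied to $|\Phi|$ shows both sides are then finite. Equivalently, we may take $\Phi=\mathbf 1_{(-\infty,E]}$ to recover the energy-marginal identity stated before the proposition, and then pass to general $\Phi$ through simple functions. In other words, $H_\#(f\,dz)=H_\#(g\,dz)$ as measures on $\R$.
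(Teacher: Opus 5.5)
Your proof is correct and follows the paper's route. The paper gives no formal proof; it only notes before the statement that $H$ is conserved along characteristics, so the flow cannot move mass between energy levels. Your argument turns that remark, together with the volume preservation from Proposition~\ref{prop:equimeasurable}, into the change of variables $\int_\Omega \Phi(H)\,g=\int_\Omega \Phi(H\circ\Psi_1)\,f=\int_\Omega \Phi(H)\,f$, and it also supplies the completeness-of-flow and integrability details that the paper leaves implicit.
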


Thus, in the autonomous case, equimeasurability is not sufficient. Besides
preserving phase-space volume and all \(L^p\)-norms of the density, the flow
also preserves the distribution of mass over the invariant energy surfaces
\[
        H(x,v)=E .
\]
Consequently, a time-independent conservative potential can only rearrange
density along its own Hamiltonian trajectories and cannot perform arbitrary
equimeasurable rearrangements of phase space.

\section{Characteristic-adapted interpolation}
In order to adapt the
Dacorogna-Moser constructive proof to 
the kinetic Liouville structure we consider the characteristics lines associated 
\[
        (x(t),v(t))=(x_0+t v_0,v_0) ,
\]
for chosen initial conditions $x_0$ and $v_0$,
associated 
the free transport
equation
\[
        \partial_t\rho+v\cdot\nabla_x\rho=0,
\]
corresponding to the case 
when \(a=0\) in the Liouville equation.
 Hence the free evolution of a density \(f\) is given by 
\[
        \rho(x,v,t)=f(x-tv,v).
\]
Instead of the standard Moser interpolation
we consider a
characteristic-adapted interpolation of the form
\begin{equation}
    \rho_t(x,v) = (1-t) \, f(x-tv,v) +  t \, g(x+(1-t)v,v).
    \label{newInterpolation}
\end{equation}
incorporating the
horizontal kinetic drift generated by the free characteristics. 

Notice that the endpoint conditions are satisfied: $\rho_0=f$ and $ \rho_1=g$. 
We also have
\begin{align*}
   \partial_t  \rho_t(x,v) & = - f(x-tv,v) +  g(x+(1-t)v,v) 
   - (1-t) \, v \, \partial_x f(x-tv,v) \\
   & \qquad -  t \, v \, \partial_x g(x+(1-t)v,v) \\
   & = - f(x-tv,v) +  g(x+(1-t)v,v)  - v \cdot \partial_x \rho_t (x,v) .
\end{align*}
This calculation provides the defect
\begin{equation}\label{defect}
\partial_t\rho_t+v\cdot\nabla_x\rho_t = g(x+(1-t)v,v) - f(x-tv,v) .
\end{equation}

\medskip


\medskip

Our approach is then to assume that the admissible vertical
$v$-flux \(q_t\) is generated by a potential in the velocity variable:
\[
        q_t=\rho_t\nabla_v U_t .
\]
Then the Liouville equation
\[
        \partial_t\rho_t
        +
        v\cdot\nabla_x\rho_t
        +
        \nabla_v\cdot q_t
        =
        0
\]
becomes
\begin{equation}
        \nabla_v\cdot(\rho_t\nabla_v U_t)
        = - \big( \partial_t\rho_t + v\cdot\nabla_x\rho_t  \big).
        \label{ellipticEqnew}
\end{equation}
This is a weighted elliptic equation in the velocity variable parametrized by
\((x,t)\) and its solvability will be discussed later in this section. Notice that these problems are uniformly
elliptic only if the interpolation density $\rho_t$ remains strictly positive.

\medskip


In this setting, the associated acceleration field is given by
\[
        a_t=\nabla_v U_t 
\]
and at the level of particles, the dynamics is given by the differential system
\[
        \dot x=v,
        \qquad
        \dot v=\nabla_v U_t(x,v).
\]
\begin{remark}
Assuming homogeneous Neumann boundary
conditions in \(v\) or sufficiently fast decay conditions, the solvability condition for the elliptic equation \eqref{ellipticEqnew} is given by 
\begin{equation}\label{solvcond}
        \int_V  \big(
        \partial_t\rho_t
        +
        v\cdot\nabla_x\rho_t
        \big)\,dv
        = 0 ,
\end{equation}
for every fixed \(x\) and \(t\).

Thus the interpolation path \(\rho_t\) must satisfy the
following kinetic compatibility condition
\begin{equation*}
        \partial_t\int_V\rho_t\,dv +  \nabla_x\cdot\int_V v\rho_t\,dv  = 0 .
\end{equation*}
If we introduce the spatial density
\[
        m_t(x)=\int_V\rho_t(x,v)\,dv
\]
and the momentum density
\[
        j_t(x)=\int_V v\rho_t(x,v)\,dv ,
\]
the above condition becomes the macroscopic continuity equation
\[
        \partial_t m_t+\nabla_x\cdot j_t=0 .
\]
\end{remark}

\begin{remark} 
Because of \eqref{defect} and the solvability condition for the elliptic problem \eqref{solvcond}, we have that $f$ and $g$ must satisfies 
\[
        \int_V f(x-tv,v)\,dv
        =
        \int_V g(x+(1-t)v,v)\,dv .
\]
Unlike the classical Dacorogna-Moser construction, where solvability reduces
to equality of total mass, the present kinetic setting imposes a family of
characteristic marginal constraints indexed by \(t\in[0,1]\).\\
However we stress that this condition originates from the assumption 
that the admissible vertical $v$-flux \(q_t\) is generated 
by a potential in the velocity variable and from the choice of the characteristc-adapted interpolation \eqref{newInterpolation}. Therefore it is not a 
necessary condition for the existence of a generic suitable acceleration sending $f$ to $g$.  
\end{remark}
\medskip




We now investigate solvability of the weighted elliptic problem in \eqref{ellipticEqnew}. 
\begin{proposition}
Let $f,g\in C^\infty(\Omega)$ satisfy
$f>0$, $g>0$ on $\Omega$ and be such that
\[
\int_{\R^n} f(x-tv,v)\,dv = \int_{\R^n} g(x+(1-t)v,v)\,dv,
\]
for every $x\in X,t\in [0,1]$. 
Fix $x\in X,t\in [0,1]$ and consider  the measure 
\begin{equation}\label{measure}
d\mu_t^x(v)
=
\rho_t(x,v)\,dv ,
\end{equation}
where
\[
\rho_t(x,v)
=
(1-t)f(x-tv,v)
+
t\,g(x+(1-t)v,v),
\]
and the associated weighted Sobolev space
\begin{equation}\label{wsob}
H^1(\mu_t^x)
=
\left\{
u :\R^n \to \R \,\,\,\,\, \mathrm{s.t.}\,
\int_{\R^n} |u|^2\,d\mu_t^x
+
\int_{\R^n} |\nabla_v u|^2\,d\mu_t^x
<
\infty
\right\}.
\end{equation}
If, for every $x\in X, t\in[0,1]$, there exists a constant $C_P^{x,t}$ such that, defining 
\[
\bar u
=
\frac{\int_{\R^n} u\,d\mu_t^x}
{\int_{\R^n} d\mu_t^x},
\]
the Poincar\'e inequality 
\begin{equation}
\int_{\R^n}
|u-\bar u|^2\,d\mu_t^x
\le
C_P^{x,t}
\int_{\R^n}
|\nabla_v u|^2\,d\mu_t^x ,
\label{ePFinequality}
\end{equation}
holds for every $u \in H^1(\mu_t^x)$, then
there exists $a:\Omega \times [0,1] \to  \R^n$ such that the associated solution 
of the Liouville equation~\eqref{eqLiouville} is exactly $\rho_t$. In particular, it satisfies $\rho_0=f$
and $\rho_1=g$. 
\end{proposition}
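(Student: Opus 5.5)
Fix $x\in X$ and $t\in[0,1]$, and set $h_t^x(v):=-\big(\partial_t\rho_t+v\cdot\nabla_x\rho_t\big)(x,v)$. By \eqref{defect}, $h_t^x(v)=f(x-tv,v)-g(x+(1-t)v,v)$. The plan is to solve \eqref{ellipticEqnew} in weak form by Lax--Milgram, or equivalently the Riesz representation theorem, on the weighted space $H^1(\mu_t^x)$ modulo constants. Then we define $a_t(x,v)=\nabla_v U_t(x,v)$ and check that $\rho_t$ solves \eqref{eqLiouville}. The weak formulation reads: find $U\in \dot H^1(\mu_t^x):=\{u\in H^1(\mu_t^x):\bar u=0\}$ such that
\[
\int_{\R^n}\nabla_v U\cdot\nabla_v\varphi\,d\mu_t^x=\int_{\R^n}h_t^x\,\varphi\,dv\qquad\text{for all }\varphi\in H^1(\mu_t^x).
\]
This is the weak form of $-\nabla_v\cdot(\rho_t\nabla_vU)=h_t^x$, which is the sign convention for which the Liouville equation with $q_t=\rho_t a_t$ holds.

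The key steps, in order, are as follows.

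\emph{Step 1: coercivity.} By the Poincar\'e inequality \eqref{ePFinequality}, the bilinear form $B(u,\varphi)=\int\nabla_v u\cdot\nabla_v\varphi\,d\mu_t^x$ is coercive on $\dot H^1(\mu_t^x)$, since $\|u\|^2_{L^2(\mu)}\le C_P^{x,t}\|\nabla_v u\|^2_{L^2(\mu)}$ there. Hence $\dot H^1(\mu_t^x)$ is a Hilbert space with the inner product $B$. Completeness follows from that of the weighted $L^2$ spaces, using $\rho_t>0$ and local comparability with the unweighted $H^1$.

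\emph{Step 2: compatibility.} The right-hand side must vanish on constants, so that testing against all of $H^1$ is equivalent to testing against $\dot H^1$. This is exactly the hypothesis $\int f(x-tv,v)\,dv=\int g(x+(1-t)v,v)\,dv$, that is, $\int h_t^x\,dv=0$, which is the condition \eqref{solvcond} from the preceding Remark.

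\emph{Step 3: boundedness of the functional.} We must show that $\ell(\varphi)=\int h_t^x\varphi\,dv$ is bounded on $\dot H^1(\mu_t^x)$. Writing $h=(h/\rho_t)\rho_t$ and using Cauchy--Schwarz together with Poincar\'e gives
\[
|\ell(\varphi)|=\Big|\int \frac{h}{\rho_t}(\varphi-\bar\varphi)\,d\mu\Big|\le \Big\|\frac{h}{\rho_t}\Big\|_{L^2(\mu)}\sqrt{C_P^{x,t}}\,\|\nabla_v\varphi\|_{L^2(\mu)}.
\]
It therefore suffices that $\int h^2/\rho_t\,dv<\infty$. Since $|h|\le f(x-tv,v)+g(x+(1-t)v,v)$, for $t\in(0,1)$ one has $\rho_t\ge c\,(f(\cdot)+g(\cdot))$ with $c=\min(t,1-t)$, so $h^2/\rho_t\le c^{-1}(f+g)$. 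This is integrable because the hypothesis itself presupposes that these $v$-integrals are finite. The endpoints need separate treatment. At $t=0$ the ratio $h/\rho_0=1-g(x+v,v)/f(x,v)$, and at $t=1$ it is $f(x-v,v)/g(x,v)-1$, so one needs $\int g(x+v,v)^2/f(x,v)\,dv<\infty$, and symmetrically at $t=1$. I expect this step to be the main obstacle. The first approach I would try is to note that the statement asks only for $a$ on $\Omega\times(0,1)$ in the problem formulation: I would construct $a$ on the open interval and recover the endpoint conditions from the continuity of $\rho_t$ in $t$, which is automatic since the endpoint values $\rho_0=f$ and $\rho_1=g$ hold by the explicit formula. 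Alternatively, I would add a mild integrability assumption at the endpoints.

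\emph{Step 4: conclusion.} The Riesz theorem gives a unique $U_t(x,\cdot)$. We set $a(x,v,t)=\nabla_v U_t(x,v)$, so that $\rho_t a\in L^1_{loc}$ in $v$. For fixed $(x,t)$ the weak formulation gives $\nabla_v\cdot(\rho_t a)=-h_t^x=\partial_t\rho_t+v\cdot\nabla_x\rho_t$ in the sense of distributions in $v$. The sign then needs reconciling: either take $a=-\nabla_vU_t$, or equivalently solve with $-h$ (the paper's \eqref{ellipticEqnew} carries the sign). Combined with \eqref{defect}, this shows that $\rho_t$ satisfies $\partial_t\rho_t+v\cdot\nabla_x\rho_t+\nabla_v\cdot(a\rho_t)=0$ in the distributional sense in $(x,v,t)$, because the identity holds pointwise in $(x,t)$ after testing with product test functions. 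Finally, $\rho_0=f$ and $\rho_1=g$ by inspection of \eqref{newInterpolation}. I would remark that only measurability of $a$ in $(x,t)$ is obtained here, which follows from uniqueness of the Lax--Milgram solution. The statement as given only claims existence of $a$ with $\rho_t$ as a (weak) solution, so I would not pursue smoothness or uniqueness of the associated Liouville solution.
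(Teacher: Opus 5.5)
Your proposal follows the paper's argument. For each fixed $(x,t)$ you solve the weighted problem in $v$ by Lax--Milgram on the mean-zero subspace of $H^1(\mu_t^x)$, with coercivity from \eqref{ePFinequality}. You then take $a$ to be the $v$-gradient of the potential, with the sign you correctly reconcile, and check the Liouville equation via \eqref{defect}, as in Lemmas~\ref{step1} and~\ref{step2}.

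Your Steps 2--3 make explicit two things the paper's Lax--Milgram step takes for granted: that $\int h_t\,dv=0$, and that $\varphi\mapsto\int h_t\varphi\,dv$ is bounded on $H^1(\mu_t^x)$. Your worry about the endpoints is justified. Take $x$-independent data on $\T^n$ with $f\propto e^{-|v|^2}$ and $g\propto e^{-|v|^2/4}$; all hypotheses hold. Yet at $t=0$ the test function $\varphi=e^{|v|^2/3}$ lies in $H^1(\mu_0^x)$ and makes $\int h_0\varphi\,dv$ infinite, so the paper's Lemma~\ref{step2} does not apply there. Restricting the construction to $t\in(0,1)$, where your Step~3 bound works, or adding an endpoint integrability assumption, is therefore the right repair.
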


The proof splits into two steps. In Lemma \ref{step1}, we show the relation between the solution of the continuity equation, in which the acceleration $a(x,v,t)$ is chosen to be generated by a potential $U_t^x(v)$, and the elliptic equation that this potential must satisfy. In Lemma \ref{step2}, we show that under the assumptions of the proposition, the elliptic problem admits a unique solution, modulo additive constants. Then the conclusion follows taking $a(x,v,t)=\nabla_vU_t^x(v)$. 
\begin{lemma}\label{step1}
Let $U_t^x$ be a solution of  
\[
\nabla_v\cdot(\rho_t\nabla_v U_t^x) = f(x-tv,v)-g(x+(1-t)v,v).
\]
Then $\rho_t$ solves the Liouville equation~\eqref{eqLiouville} with
\[
a(x,v,t)=\nabla_v U_t^x(v).
\]
In particular, it satisfies $\rho_0=f$
and $\rho_1=g$. 
\end{lemma}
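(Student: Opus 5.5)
The argument is a direct substitution into the Liouville equation, using the defect identity \eqref{defect} already computed for the characteristic-adapted interpolation \eqref{newInterpolation}. With $a(x,v,t)=\nabla_v U_t^x(v)$, the Liouville equation \eqref{eqLiouville} for $\rho_t$ reads
\[
\partial_t\rho_t+v\cdot\nabla_x\rho_t+\nabla_v\cdot\big(\rho_t\nabla_v U_t^x\big)=0 .
\]
So it suffices to show that the left-hand side vanishes identically.

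First, I recall from \eqref{defect} that
\[
\partial_t\rho_t+v\cdot\nabla_x\rho_t=g(x+(1-t)v,v)-f(x-tv,v).
\]
This follows from the chain rule applied to the two terms of $\rho_t$. The $x$-derivative terms $-(1-t)v\cdot\nabla_x f(x-tv,v)$ and $-t\,v\cdot\nabla_x g(x+(1-t)v,v)$ recombine exactly into $-v\cdot\nabla_x\rho_t$, which is what makes the interpolation characteristic-adapted.

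Second, by hypothesis $\nabla_v\cdot(\rho_t\nabla_v U_t^x)=f(x-tv,v)-g(x+(1-t)v,v)$. Adding this to the previous identity gives zero pointwise, for every $(x,v,t)$. Hence $\rho_t$ is a classical solution of \eqref{eqLiouville} with the stated acceleration. The endpoint conditions follow by evaluating \eqref{newInterpolation}:
\begin{itemize}
\item at $t=0$ it gives $\rho_0(x,v)=f(x,v)$;
\item at $t=1$ it gives $\rho_1(x,v)=g(x,v)$.
\end{itemize}

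The only delicate point is regularity, namely whether the expression $\nabla_v\cdot(\rho_t\nabla_v U_t^x)$ in the Liouville equation coincides with the one in the elliptic equation. The elliptic problem is posed in $v$ for each fixed $(x,t)$, while the Liouville equation involves no derivatives of $U$ in $x$ or $t$; only $\nabla_v\cdot(\rho_t a)$ appears. Therefore no joint regularity of $U_t^x$ in $(x,t)$ is needed for this identity, though it would matter later for well-posedness of the characteristic flow. I would state the lemma for classical solutions, with $U_t^x\in C^2$ in $v$. I would remark that, if $U_t^x$ is only a weak $H^1(\mu_t^x)$ solution, the same computation holds in the distributional sense in $v$ for each $(x,t)$, and hence $\rho_t$ solves \eqref{eqLiouville} in the weak sense.
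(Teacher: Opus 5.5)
Your proposal is correct and follows the paper's proof: you obtain $\partial_t\rho_t+v\cdot\nabla_x\rho_t=g(x+(1-t)v,v)-f(x-tv,v)$ by the chain rule, add the elliptic equation to get $\partial_t\rho_t+v\cdot\nabla_x\rho_t+\nabla_v\cdot(\rho_t\nabla_v U_t^x)=0$, and read the endpoints off \eqref{newInterpolation}. Your closing regularity remark goes beyond what the paper addresses; note only that upgrading to a weak solution in all of $(x,v,t)$ would also require joint measurability and local integrability of $\rho_t\nabla_v U_t^x$ in $(x,t)$, which solving the elliptic problem separately for each fixed $(x,t)$ does not by itself provide.
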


\begin{proof}
Recall that
$$
    \rho_t(x,v) = (1-t) \, f(x-tv,v) +  t \, g(x+(1-t)v,v).
$$
Hence
\begin{align*}
   \partial_t  \rho_t(x,v) 
& = - f(x-tv,v) +  g(x+(1-t)v,v)     - (1-t) \, v \, \partial_x f(x-tv,v) \\
   & \qquad -  t \, v \, \partial_x g(x+(1-t)v,v) \\
   & = - f(x-tv,v) +  g(x+(1-t)v,v)  - v \cdot \nabla_x \rho_t (x,v).
\end{align*}
Which gives
\begin{align*}
\partial_t  \rho_t(x,v) + v \cdot \nabla_x \rho_t (x,v) & = g(x+(1-t)v,v) - f(x-tv,v)\\
& =  
- \nabla_v\cdot(\rho_t\nabla_v U_t^x).
\end{align*}
Then
$$
 \partial_t\rho_t+v\cdot\nabla_x\rho_t  + \nabla_v a_t(x,v) = 0.
$$
\end{proof}

\begin{lemma}\label{step2}
Consider  the measure $d\mu_t^x(v)$ and the associated weighted Sobolev space $H^1(\mu_t^x)$ as defined in \eqref{measure},\eqref{wsob}.

Consider the equation
\begin{equation}\label{eq:elliptic}
\nabla_v\cdot(\rho_t(x,v)\nabla_v U_t^x(x,v))
=
h_t(x,v),
\end{equation}
where
\[
h_t(x,v) = f(x-tv,v)-g(x+(1-t)v,v).
\]
Assume that, for every $x \in X$ and $t\in[0,1]$, the Poincar\'e inequality \eqref{ePFinequality} holds for every $u \in H^1(\mu_t^x)$.
Then~\eqref{eq:elliptic} with homogeneous Neumann boundary conditions in $v$ or sufficiently fast decay conditions has a unique solution in the space
\[
\left\{
u\in H^1(\mu_t^x):
\int_{\R^n}u\,d\mu_t^x=0
\right\}.
\]
\end{lemma}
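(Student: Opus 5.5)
We will argue variationally, via Lax--Milgram (equivalently, Riesz representation) on the closed subspace
\[
\mathcal H_0:=\Big\{u\in H^1(\mu_t^x):\ \int_{\R^n}u\,d\mu_t^x=0\Big\},
\]
with $x\in X$ and $t\in[0,1]$ fixed throughout. First we derive the weak formulation. We multiply \eqref{eq:elliptic} by a test function $\varphi$ and integrate by parts in $v$; the boundary term vanishes under homogeneous Neumann conditions or sufficiently fast decay. We look for $U=U_t^x\in\mathcal H_0$ such that
\[
B(U,\varphi):=\int_{\R^n}\nabla_v U\cdot\nabla_v\varphi\,d\mu_t^x=-\int_{\R^n}h_t(x,v)\,\varphi(v)\,dv=:\ell(\varphi)
\qquad\text{for all }\varphi\in H^1(\mu_t^x).
\]
The hypothesis $\int_{\R^n} f(x-tv,v)\,dv=\int_{\R^n} g(x+(1-t)v,v)\,dv$ says exactly that $\int_{\R^n} h_t\,dv=0$, so $\ell$ annihilates constants. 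It therefore suffices to test with $\varphi\in\mathcal H_0$: a general $\varphi$ differs from $\varphi-\bar\varphi$ by a constant, and both sides are unchanged by adding a constant.

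Next we check coercivity. $\mathcal H_0$ is closed in $H^1(\mu_t^x)$, since $u\mapsto\int u\,d\mu_t^x$ is continuous; this uses that $\mu_t^x$ is a finite measure, which holds because $f,g$ are integrable along the characteristic slices, and then Cauchy--Schwarz applies. For $u\in\mathcal H_0$ we have $\bar u=0$, so the Poincar\'e inequality \eqref{ePFinequality} gives
\[
\|u\|_{H^1(\mu_t^x)}^2\le (1+C_P^{x,t})\,B(u,u).
\]
Hence $B$ is a bounded, symmetric, coercive bilinear form on $\mathcal H_0$, i.e.\ an equivalent inner product.

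The main obstacle is the continuity of $\ell$ on $\mathcal H_0$, because $h_t$ is integrated against $dv$ and not against $d\mu_t^x$. We will write
\[
\ell(\varphi)=-\int \frac{h_t}{\rho_t}\,\varphi\,d\mu_t^x,
\]
then use Cauchy--Schwarz together with Poincar\'e. This gives
\[
|\ell(\varphi)|\le \Big\|\tfrac{h_t}{\rho_t}\Big\|_{L^2(\mu_t^x)}\,\big(C_P^{x,t}\big)^{1/2}\,\|\nabla_v\varphi\|_{L^2(\mu_t^x)} .
\]
So we need the finiteness condition
\[
\int_{\R^n} \frac{h_t^2}{\rho_t}\,dv<\infty .
\]
For $t\in(0,1)$ we have $\rho_t\ge (1-t)f(x-tv,v)$ and $\rho_t\ge t\,g(x+(1-t)v,v)$. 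Therefore
\[
\frac{h_t^2}{\rho_t}\le \frac{2f(x-tv,v)}{1-t}+\frac{2g(x+(1-t)v,v)}{t},
\]
which is integrable in $v$ by the same integrability assumption on the slices. At the endpoints $t=0$ and $t=1$ this bound degenerates. There, for $t=0$, we will instead use $h_0^2/f\le 2f+2g(x+v,v)^2/f(x,v)$, and we must impose or assume the integrability of $g^2/f$ on the slice; the case $t=1$ is symmetric. Alternatively, the endpoints can be treated as a limiting case. This is the point where the statement implicitly relies on the decay conditions.

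With continuity in hand, Lax--Milgram (or Riesz in the inner product $B$) yields a unique $U_t^x\in\mathcal H_0$ satisfying the weak formulation. Uniqueness also follows directly: the difference $w$ of two solutions satisfies $B(w,w)=0$, so $\nabla_v w=0$ $\mu_t^x$-a.e., and Poincar\'e then gives $w=\bar w=0$. Finally, smoothness and positivity of $\rho_t$ make \eqref{eq:elliptic} locally uniformly elliptic with smooth coefficients. Interior elliptic regularity then upgrades the weak solution to a classical one, which is what Lemma~\ref{step1} requires.
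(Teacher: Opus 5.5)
Your proof follows the same route as the paper's: weak formulation, coercivity of $B$ on the mean-zero subspace via the Poincar\'e inequality, then Lax--Milgram. It is more complete than the paper's. The paper's proof never invokes $\int_{\R^n}h_t\,dv=0$, which is necessary because $\varphi\equiv1$ is an admissible test function. It also never checks that $\varphi\mapsto-\int h_t\varphi\,dv$ is bounded on $H^1(\mu_t^x)$. Your estimate $h_t^2/\rho_t\le 2f(x-tv,v)/(1-t)+2g(x+(1-t)v,v)/t$ supplies this for $t\in(0,1)$.

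Your caveat at the endpoints is not an artifact of the method, so drop the suggestion that $t=0,1$ can be recovered as a limiting case. Here is a counterexample to the lemma at $t=0$:
\begin{itemize}
\item Take $n=1$, $X=\T^1$, $f=2e^{-v^2}$, $g=e^{-v^2/4}$.
\item Both have $v$-integral $2\sqrt\pi$, so the slice condition holds for every $t$.
\item The Poincar\'e inequality holds for every $t\in[0,1]$: $\rho_t$ is Gaussian at $t=0,1$, and a bounded perturbation of $e^{-v^2/4}$ for $0<t<1$. Even the two-sided Gaussian bounds of the paper's Example are satisfied.
\item At $t=0$, any distributional solution satisfies $fU'=H+c$ with $H(v)=\int_{-\infty}^v(f-g)\,ds$.
\item Since $|H(v)|\sim\frac{2}{|v|}e^{-v^2/4}$ as $|v|\to\infty$,
\[
\int_{\R}|U'|^2\,d\mu_0^x=\int_{\R}\frac{(H+c)^2}{f}\,dv=\infty \qquad\text{for every } c\in\R .
\]
\end{itemize}
So no solution in $H^1(\mu_0^x)$ exists, and the lemma as stated fails at $t=0$. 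An extra hypothesis such as your $\int g(x+v,v)^2/f(x,v)\,dv<\infty$, and its mirror at $t=1$, is genuinely required. Because $\{0,1\}$ is a null set of times, this does not by itself break the Proposition, which only needs the field on $(0,1)$.
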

\begin{proof}

The weak formulation of the elliptic problem~\eqref{eq:elliptic} is
\[
\int_{\R^n}
\nabla_v U_t^x\cdot\nabla_v\varphi
\,d\mu_t^x
=
- \int_{\R^n}
h_t\varphi\,dv,
\qquad \varphi\in H^1(\mu_t^x).
\]
Since constants belong formally to the kernel of the operator, uniqueness
can only hold modulo additive constants. Therefore for uniqueness 
we impose the normalization condition
\begin{equation}
\int_{\R^n} U_t^x\,d\mu_t^x=0 .
\label{eUniqCond}
\end{equation}
The Poincar\'e
inequality ensures that the bilinear form
\[
B(u,\varphi)
=
\int_{\R^n}
\nabla_v u\cdot\nabla_v\varphi
\,d\mu_t^x
\]
is coercive on the subspace
\[
\left\{
u\in H^1(\mu_t^x):
\int_{\R^n}u\,d\mu_t^x=0
\right\}.
\]
Consequently, by the Lax--Milgram theorem, one obtains existence and
uniqueness of weak solutions. 
\end{proof}

\begin{example}
The essential analytical issue becomes the validity of the
weighted Poincar\'e inequality associated with the interpolation density
\(\rho_t\).
A classical sufficient condition is provided by Gaussian confinement.
Assume that there exist positive constants \(c_0,c_1,C_0,c_2\) such that for any $x \in X$ and $t \in [0,1]$
\[
c_0 e^{-c_1|v|^2}
\le
\rho_t(x,v)
\le
C_0 e^{-c_2|v|^2},
\qquad
v\in\R^n.
\]

Then the measure \(d\mu_t^x=\rho_t\,dv\) is uniformly comparable with a
Gaussian measure. Consequently, classical weighted Poincar\'e inequalities
for Gaussian-type measures apply; see for instance the Bakry-\'Emery theory
\cite{BakryEmery1985}. Therefore there exists a constant \(C>0\),
depending only on the comparison constants
\(c_0,c_1,C_0,c_2\), such that \eqref{ePFinequality} holds. 

We now show that such Gaussian confinement estimates hold for suitable
Gaussian endpoint densities. Consider the characteristic interpolation
\[
\rho_t(x,v)
=
(1-t)f(x-tv,v)
+
t\,g(x+(1-t)v,v).
\]
Assume that the endpoint densities satisfy Gaussian bounds of the form
\[
a_f e^{-\alpha_f|v|^2-\beta_f}
\le
f(x-tv,v)
\le
A_f e^{-\gamma_f|v|^2+B_f},
\]
and
\[
a_g e^{-\alpha_g|v|^2-\beta_g}
\le
g(x+(1-t)v,v)
\le
A_g e^{-\gamma_g|v|^2+B_g},
\]
for any \(x\), \(v\), and \(t\in[0,1]\).

We first derive the lower bound. If \(0\le t\le\tfrac12\), then
\[
\rho_t(x,v)
\ge
\frac12 f(x-tv,v)
\ge
\frac12 a_f e^{-\alpha_f|v|^2-\beta_f}.
\]

If \(\tfrac12\le t\le1\), then
\[
\rho_t(x,v)
\ge
\frac12 g(x+(1-t)v,v)
\ge
\frac12 a_g e^{-\alpha_g|v|^2-\beta_g}.
\]

Consequently, we obtain
\[
\rho_t(x,v) \ge c_0 e^{-c_1|v|^2},
\]
with
\[
c_0 = \frac12 \min\{a_f e^{-\beta_f},a_g e^{-\beta_g}\}, \qquad 
c_1 = \max\{\alpha_f,\alpha_g\}.
\]

We now derive the upper bound. Since $t \in [0,1]$, we have
\begin{align*}
\rho_t(x,v)
&=
(1-t)f(x-tv,v)
+
t\,g(x+(1-t)v,v)
\\
&\le
f(x-tv,v)
+
g(x+(1-t)v,v)
\\
&\le
A_f e^{-\gamma_f|v|^2+B_f}
+
A_g e^{-\gamma_g|v|^2+B_g}.
\end{align*}

Hence we obtain
\[
\rho_t(x,v)
\le
C_0 e^{-c_2|v|^2},
\]
where
\[
c_2
=
\min\{\gamma_f,\gamma_g\}, \qquad
C_0 = A_f e^{B_f}+A_g e^{B_g}.
\]

Therefore the interpolation density satisfies the two-sided Gaussian estimate
\[
c_0 e^{-c_1|v|^2} \le \rho_t(x,v) \le C_0 e^{-c_2|v|^2}.
\]

In particular, the weighted elliptic problem associated with the
characteristic interpolation admits a coercive variational formulation in
weighted Sobolev spaces, yielding together with \eqref{eUniqCond} existence 
and uniqueness of weak solutions.
\end{example}

\section{Further remarks}

The preceding construction shows that a Moser-type strategy for the
Liouville equation is possible once an interpolation path
\(\rho_t\) has been prescribed. However, unlike the classical
Dacorogna-Moser construction for the continuity equation, the interpolation
cannot be regarded as an auxiliary independent choice, but rather as part of the problem.

In the Liouville equation,
\[
\partial_t\rho
+
v\cdot\nabla_x\rho
+
\nabla_v\cdot(\rho a)=0,
\]
the phase-space characteristics are constrained by
\[
\dot x=v,
\qquad
\dot v=a(x,v,t).
\]
Thus the horizontal transport is prescribed by the velocity variable itself,
and only the vertical component is free. Consequently, fixing an
interpolation path already imposes a strong restriction on the admissible
characteristic geometry.

After prescribing \(\rho_t\), the Liouville equation becomes
\[
\nabla_v\cdot(\rho_t a_t)
=
-
\big(
\partial_t\rho_t
+
v\cdot\nabla_x\rho_t
\big).
\]
Under the potential ansatz $a_t=\nabla_v U_t$, 
one obtains the weighted elliptic equation
\[
\nabla_v\cdot(\rho_t\nabla_v U_t)
=
-
\big(
\partial_t\rho_t
+
v\cdot\nabla_x\rho_t
\big).
\]
Assuming homogeneous Neumann boundary conditions in $v$ or sufficiently fast decay, its  solvability condition is
\[
\int_V
\big(
\partial_t\rho_t
+
v\cdot\nabla_x\rho_t
\big)\,dv
=
0 ,
\]
or as we have seen
\[
\partial_t m_t+\nabla_x\cdot j_t=0 .
\]

Therefore the compatibility condition arising from the potential ansatz is not merely a property of the
endpoints \(f\) and \(g\), but of the entire interpolation path. 

In particular, choosing the characteristics interpolation
\[
\rho_t(x,v)
=
(1-t)f(x-tv,v)
+
t g(x+(1-t)v,v), 
\]
we have seen that the solvability condition reduces to the property of the endpoints of satisfying the equality
\[
        \int_V f(x-tv,v)\,dv
        =
        \int_V g(x+(1-t)v,v)\,dv,
\]
for any $x\in X$, $t \in [0,1]$. 

In any case, failure of the associated compatibility condition does not
imply failure of Liouville reachability. It only implies failure of this
particular Moser-type construction based on the potential ansatz and on this characteristic-adapted interpolation.

This suggests that a genuinely general Moser construction for the Liouville
equation should treat the interpolation itself as part of the unknown.\\

\bibliographystyle{abbrv}
\bibliography{bibliomoser}

\end{document}